\newtheorem{propo}{{\bf Proposition}}[section]
\newtheorem{coro}[propo]{{\bf Corollary}}
\newtheorem{lemma}[propo]{{\bf Lemma}} \newtheorem{theor}[propo]{{\bf
Theorem}} \newtheorem{ex}{{\sc Example}}[section]
\newtheorem{definition}{\bf Definition}
\newenvironment{proof}{{\bf Proof.}}{$\Box$}
\def\N{{\mathbb N}}
\begin{document}

\vspace*{1.0in}

\begin{center} ALMOST-REDUCTIVE AND ALMOST-ALGEBRAIC LEIBNIZ ALGEBRAS
\end{center}
\bigskip

\begin{center} DAVID A. TOWERS 
\end{center}
\bigskip

\begin{center} Department of Mathematics and Statistics

Lancaster University

Lancaster LA1 4YF

England

d.towers@lancaster.ac.uk 
\end{center}
\bigskip

{\bf Abstract}

 This paper examines whether the concept of an almost-algebraic Lie algebra developed by Auslander and Brezin in \cite{ab} can be introduced for Leibniz algebras. Two possible analogues are considered: almost-reductive and almost-algebraic Leibniz algebras. For Lie algebras these two concepts are the same, but that is not the case for Leibniz algebras, the class of almost-algebraic Leibniz algebras strictly containing that of the almost-reductive ones. Various properties of these two classes of algebras are obtained, together with some relationships to $\phi$-free, elementary, $E$-algebras and $A$-algebras.
\medskip

\noindent {\em Mathematics Subject Classification 2000}: 17B05, 17B20, 17B30, 17B50.
\par

\noindent {\it Keywords:} Leibniz algebra, Frattini ideal, $\phi$-free, elementary, $E$-algebra, $A$-algebra, almost-algebraic, almost-reductive.

\bigskip

\section{Introduction}
\medskip

An algebra $L$ over a field $F$ is called a {\em Leibniz algebra} if, for every $x,y,z \in L$, we have
\[  [x,[y,z]]=[[x,y],z]-[[x,z],y]
\]
In other words the right multiplication operator $R_x : L \rightarrow L : y\mapsto [y,x]$ is a derivation of $L$. As a result such algebras are sometimes called {\it right} Leibniz algebras, and there is a corresponding notion of {\it left} Leibniz algebras, which satisfy
\[  [x,[y,z]]=[[x,y],z]+[y,[x,z]].
\]
Clearly the opposite of a right (left) Leibniz algebra is a left (right) Leibniz algebra, so, in most situations, it does not matter which definition we use. A {\em symmetric} Leibniz algebra $L$ is one which is both a right and left Leibniz algebra and in which $[[x,y],[x,y]]=0$ for all $x,y\in L$. This last identity is only needed in characteristic two, as it follows from the right and left Leibniz identities otherwise (see \cite[Lemma 1]{jp}). Symmetric Leibniz algebras $L$ are flexible, power associative and have $x^3=0$ for all $x\in L$ (see \cite[Proposition 2.37]{feld}), and so, in a sense, are not far removed from Lie algebras.
\par

Put $I=\langle\{x^2:x\in L\}\rangle$. Then $I$ is an ideal of $L$ and $L/I$ is a Lie algebra called the {\em liesation} of $L$. We define the following series:
\[ L^1=L,L^{k+1}=[L^k,L] \hbox{ and } L^{(1)}=L,L^{(k+1)}=[L^{(k)},L^{(k)}] \hbox{ for all } k=2,3, \ldots
\]
Then $L$ is {\em nilpotent} (resp. {\em solvable}) if $L^n=0$ (resp.$ L^{(n)}=0$) for some $n \in \N$. The {\em nilradical}, $N(L)$, (resp. {\em radical}, $\Gamma(L)$) is the largest nilpotent (resp. solvable) ideal of $L$.
\par

Throughout, $L$ will denote a (right) Leibniz algebra over a field $F$ of characteristic zero unless otherwise specified. The Frattini ideal of $L$, $\phi(L)$, is the largest ideal of $L$ contained in all maximal subalgebras of $L$. The Lie/Leibniz algebra $L$ is called {\em $\phi$-free} if $\phi(L) = 0$, and {\em elementary} if $\phi(B)=0$ for every subalgebra $B$ of $L$. Lie/Leibniz algebras all of whose nilpotent subalgebras are abelian are called {\em $A$-algebras}; Lie/Leibniz algebras $L$ such that $\phi(B)\leq \phi(L)$ for all subalgebras $B$ of $L$ are called {\em $E$-algebras}. The {\em abelian socle}, $Asoc(L)$, of a Lie/Leibniz algebra $L$ is the sum of its minimal abelian ideals.
\par

A linear Lie algebra $L\leq {\rm gl}(V)$ is {\it almost algebraic} if $L$ contains the nilpotent and semisimple Jordan components of its elements; an abstract Lie algebra $L$ is then called almost algebraic if ${\rm ad}L\leq {\rm gl}(L)$ is almost algebraic. Here we are exploring whether an analogous concept to this last one can be developed for Leibniz algebras and then to determine properties of, and inter-relationships between, these five classes of algebras analogous to those obtained by Towers and Varea in \cite{further}.  
\par

In section 2 the concepts of an almost-reductive Leibniz algebra and of an almost-algebraic Leibniz algebra are introduced and various basic properties of them are produced. Descriptions of symmetric Leibniz algebras wich are almost algebraic, and of those with an almost-reductive radical are obtained. In additionh, some analogues of the results in \cite{further} are found for symmetric Leibniz algebras. In section 3 the inner derivation algebra, $R(L)$, of $L$ is defined. This is a Lie algebra and some properties of almost-reductive and almost-algebraic Leibniz algebras $L$ are related to corresponding properties of $R(L)$. We also introduce the concept of an $L$-split element of a Leibniz algebra $L$ and show that $L$ is almost algebraic if every element of $L$ is $L$-split. It is also shown that if every element of a subalgebra $B$ of $L$ is $L$-split then the idealiser of $B$ in $L$ is almost algebraic. In the final section we determine some consequences of these results for symmetric Leibniz $A$-algebras.

\section{Definitions and Preliminary results}
\begin{definition}
We call $L$ {\bf almost reductive} if $L=N(L)\dot{+}\Sigma$, where $\Sigma$ is a Lie algebra and $N(L)$ is a completely reducible $\Sigma$-bimodule.
\end{definition}

\begin{lemma}\label{red} Let $L$ be an almost-reductive Leibniz algebra. Then $\Sigma=C\oplus S$, where $S$ is a semisimple Lie algebra, $C$ is an abelian Lie algebra and $R_c|_{N(L)}$ is semisimple for all $c\in C$.
\end{lemma}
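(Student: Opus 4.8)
The plan is to prove that $\Sigma$ is reductive, with $C=\Gamma(\Sigma)$ its solvable radical and $S$ a Levi complement, and that the right multiplications by elements of $C$ are semisimple on $N(L)$. I view $N(L)$ as a completely reducible Leibniz $\Sigma$-bimodule, writing $R_\sigma$ for the right multiplication $m\mapsto[m,\sigma]$ and $\ell_\sigma$ for the left multiplication $m\mapsto[\sigma,m]$. The map $\ell\colon\Sigma\to\mathrm{gl}(N(L))$ is a Lie homomorphism and $R$ an anti-homomorphism, so $\ell(\Sigma)$ and $R(\Sigma)$ are linear Lie algebras. Splitting $N(L)$ into irreducible sub-bimodules, each piece is either symmetric, where $R_\sigma=-\ell_\sigma$, or antisymmetric, where $R_\sigma=0$; in either case it is an irreducible left module, so $N(L)$ is a completely reducible $\ell(\Sigma)$-module (and, likewise, $R(\Sigma)$-module). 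I will move information between the two one-sided actions through this splitting.

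Two standard facts will do most of the work. First, if a linear Lie algebra acts completely reducibly on a space and an ideal of it consists of nilpotent operators, then that ideal is zero: on each irreducible constituent the common fixed space of the ideal is nonzero by Engel's theorem and is invariant under the whole algebra because the ideal is normal, hence is everything. Second, any subalgebra $W\subseteq\Sigma$ which is an ideal of $\Sigma$, acts trivially on $N(L)$, and is nilpotent as a Lie algebra must vanish: such a $W$ satisfies $[W,N(L)]=[N(L),W]=0$ and $[W,\Sigma]\subseteq W$, so it is a nilpotent ideal of $L$, whence by maximality of the nilradical $W\subseteq N(L)\cap\Sigma=0$. I apply these to $W=[\Gamma(\Sigma),\Gamma(\Sigma)]$. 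By Lie's theorem (extending scalars to $\overline F$, which preserves complete reducibility in characteristic zero) $\Gamma(\Sigma)$ is triangularisable on each constituent, so $\ell([\Gamma(\Sigma),\Gamma(\Sigma)])$ and $R([\Gamma(\Sigma),\Gamma(\Sigma)])$ consist of nilpotent operators; being ideals of $\ell(\Sigma)$ and $R(\Sigma)$ respectively, they are zero by the first fact. Thus $[\Gamma(\Sigma),\Gamma(\Sigma)]$ acts trivially, and the second fact forces $[\Gamma(\Sigma),\Gamma(\Sigma)]=0$, so $C:=\Gamma(\Sigma)$ is abelian.

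Next I would establish the two remaining assertions. For semisimplicity: $C$ is abelian, so by Clifford's theorem it acts completely reducibly on $N(L)$; the nilpotent elements of the abelian linear algebra $\ell(C)$ form an ideal of nilpotent operators, hence vanish by the first fact, so every $\ell_c$ is semisimple, and therefore so is every $R_c$ (equal to $-\ell_c$ on symmetric pieces and to $0$ on antisymmetric ones). This gives the statement about $R_c|_{N(L)}$. For centrality of $C$, fix $c\in C$ and note that since $\ell_c$ is a semisimple operator, $\mathrm{ad}\,\ell_c$ is semisimple on the Lie algebra $\ell(\Sigma)$; at the same time $\ell(C)$ is an abelian ideal there, so $\mathrm{ad}\,\ell_c$ maps $\ell(\Sigma)$ into $\ell(C)$ and $\ell(C)$ to $0$, giving $(\mathrm{ad}\,\ell_c)^2=0$. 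A map that is both semisimple and nilpotent is zero, so $\ell_{[s,c]}=[\ell_s,\ell_c]=0$ for every $s\in S$, and hence $R_{[s,c]}=0$ as well. Thus $[S,C]$ acts trivially on $N(L)$; being an abelian, hence nilpotent, ideal of $\Sigma$ it vanishes by the second fact. Finally a Levi decomposition $\Sigma=C\dot{+}S$ together with $[C,C]=[S,C]=0$ makes $C$ and $S$ ideals with $\Sigma=C\oplus S$, $S$ semisimple and $C$ abelian.

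The main obstacle is that complete reducibility only ever yields nilpotency of an operator or the vanishing of a commutator of operators, whereas the conclusion needs honest zero brackets in $\Sigma$. The uniform way around this is the second fact above: at each stage I manufacture an ideal of $\Sigma$ that acts trivially on $N(L)$ and is nilpotent, and then invoke $\Sigma\cap N(L)=0$ and the maximality of the nilradical to kill it. The slickest instance, and the step I expect to require the most care, is the simultaneous semisimplicity and nilpotency of $\mathrm{ad}\,\ell_c$ used to force $[S,C]=0$; a secondary Leibniz-specific subtlety is tracking both one-sided actions through the symmetric/antisymmetric splitting, since the theorem asserts semisimplicity of the right multiplications specifically.
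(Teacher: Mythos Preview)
Your overall strategy is sound: you are reconstructing, by hand, the content of the classical result that a Lie algebra acting completely reducibly on a finite-dimensional space is reductive with semisimply acting centre. The paper does not do this; its proof is two lines. It invokes \cite[Lemma~1.9]{barnes} to obtain, for each irreducible $\Sigma$-sub-bimodule $A$ of $N(L)$, that either $[\Sigma,A]=0$ or $[\sigma,a]=-[a,\sigma]$ for all $a\in A$, $\sigma\in\Sigma$; in either case $A$ is an irreducible \emph{right} $\Sigma$-module, so $N(L)$ is a completely reducible right $\Sigma$-module, and then \cite[Theorem~11, p.~47]{jac} delivers $\Sigma=C\oplus S$ with the stated properties. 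Your longer route has the merit of making explicit, via your ``second fact'', why a nilpotent ideal of $\Sigma$ acting trivially on $N(L)$ must vanish, a point the paper's appeal to Jacobson leaves implicit.

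There is, however, a genuine slip in your Leibniz bookkeeping that propagates through the write-up. For a \emph{right} Leibniz algebra the Barnes dichotomy says that in the antisymmetric case the \emph{left} action vanishes ($\ell_\sigma=0$), not the right one; this is exactly what the paper records as ``$[\Sigma,A]=0$''. Consequently each irreducible bimodule is an irreducible \emph{right} module, and it is $R(\Sigma)$, not $\ell(\Sigma)$, that one should analyse. Relatedly, your assertion that $\ell\colon\Sigma\to\mathrm{gl}(N(L))$ is a Lie homomorphism is not true a priori: the right Leibniz identity gives $\ell_x\ell_y=\ell_{[x,y]}-R_y\ell_x$, which mixes the two actions. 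What \emph{is} an (anti-)homomorphism from the outset is $R$. The repair is mechanical: run the entire argument with $R$ in place of $\ell$. Then $R(\Sigma)$ is a linear Lie algebra acting completely reducibly, $R([\Gamma(\Sigma),\Gamma(\Sigma)])$ is an ideal of nilpotent operators and hence zero, so $[\Gamma(\Sigma),\Gamma(\Sigma)]$ acts trivially (on both sides, via the dichotomy) and your second fact kills it; Clifford plus the nilpotent-ideal argument gives each $R_c$ semisimple; and $(\mathrm{ad}\,R_c)^2=0$ on $R(\Sigma)$ together with semisimplicity of $\mathrm{ad}\,R_c$ forces $R_{[S,C]}=0$, whence $\ell_{[S,C]}=0$ from the dichotomy and $[S,C]=0$ by the second fact. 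With that left/right swap your proof goes through; it is simply a hand-rolled version of the Jacobson citation the paper uses.
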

\begin{proof} If $A$ is an irreducible $\Sigma$-bimodule of $L$ we have that $[\Sigma,A]=0$ or $[\sigma,a]=-[a,\sigma]$ for all $a\in A, \sigma \in \Sigma$, by \cite[Lemma 1.9]{barnes}. It follows that $A$ is an irreducible right $\Sigma$-module of $L$ and the result now follows from \cite[Theorem 11, p.47]{jac}.
\end{proof}

\begin{definition} We call $L$ {\bf almost algebraic} if $L/I$ is an almost-algebraic Lie algebra.
\end{definition}

\begin{theor}\label{t:aa} Let $L$ be an almost-reductive Leibniz algebra. Then $L$ is almost algebraic.
\end{theor}
\begin{proof} Let $L=N(L)\dot{+} \Sigma$ be an almost-reductive Leibniz algebra, where $\Sigma=C\oplus S$, and let $N/I$ be the nilradical of $L/I$. Then $N(L)\subseteq N$ and $N=N(L)+N\cap \Sigma$. Since $N\cap \Sigma$ is a solvable ideal of $\Sigma$, $N\cap \Sigma \subseteq C$. Let $C=N\cap \Sigma \oplus D$. Then $L/I=N/I\dot{+} (\Sigma'+I)/I$ where $\Sigma'=D\oplus S$ is reductive. Hence $L/I$ is an almost-algebraic Lie algebra.
\end{proof}
\medskip

The converse of the above result is false, as the following example shows.

\begin{ex}\label{ex1} Let $L$ be the four-dimensional solvable cyclic Leibniz algebra with basis $a,a^2,a^3,a^4$ and $[a^4,a]=a^4$. Then $I=L^2$ and $L/I$ is trivially almost algebraic. But $N(L)=I$ and $L$ is not completely reducible: for example, there is no ideal $A$ of $L$ such that $I=A\oplus (Fa^3+Fa^4)$.
\end{ex}

In fact, it even fails for symmetric Leibniz algebras, as shown in the next example.

\begin{ex}\label{ex2} Let $L$ be the three-dimensional symmetric Leibniz algebra with basis $e_1,e_2,e_3$ and non-zero products $[e_1,e_2]=e_1$, $[e_2,e_1]=-e_1$, $e_2^2=e_3$. Then $I=Fe_3$ and $L/I\cong Fe_1+Fe_2$, which has nilradical $Fe_1$ and is clearly almost algebraic. However, $N(L)=Fe_1+Fe_3$ and $L$ does not split over this ideal, so $L$ is not almost reductive.
\end{ex}

\begin{theor}\label{phifree} Let $L$ be a Leibniz algebra.
\begin{itemize} 
\item[(i)] If $L$ is $\phi$-free, then $L$ is
almost reductive (and so, almost algebraic). 
\item[(ii)] Let $L$ be almost reductive. Then $L$ is
$\phi$-free if and only if its nilradical is abelian.
\end{itemize} 
\end{theor} 
\begin{proof} (i) Let $L$ be
$\phi$-free. By \cite[Theorem 2.4 and Corollary 2.9]{stit} we have that $L = N(L) \dot{+} V$ where $V$
is a Lie subalgebra of $L$ acting completely reducibly on $N(L)$ and $N(L)=$ Asoc $L$. It follows that $L$ is almost reductive.

(ii) Suppose that $L$ is almost reductive and that $N(L)$ is abelian. Then $N(L) =$ Asoc $L$, so $L$ is $\phi$-free by the same argument as in \cite{frat}.
The converse follows from \cite[Theorem 2.4 and Corollary 2.9]{stit}.
\end{proof}

\begin{coro}\label{arphifree} Let $L$ be an almost reductive Leibniz algebra. Then $L$ is $\phi$-free if and only if $L=\Lambda \dot{+} I$, where $\Lambda=\Sigma\dot{+}A$  is a $\phi$-free almost-algebraic Lie algebra with nilradical $A$, $N(L)=A\oplus I$, $[I,\Sigma]=I$ and $I$ is a completely reducible $\Sigma$-bimodule.
\end{coro}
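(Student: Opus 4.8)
The plan is to prove the two implications separately, throughout replacing the condition ``$\phi$-free'' by the more tractable ``$N(L)$ abelian'' via Theorem \ref{phifree}(ii). Before any decomposition I would record the standard facts about the Leibniz kernel $I=\langle\{x^2:x\in L\}\rangle$: it is an abelian ideal, it lies in $N(L)$, and it satisfies $[L,I]=0$ (indeed $[x,y^2]=[[x,y],y]-[[x,y],y]=0$ by the Leibniz identity). Moreover, when $N(L)$ is abelian, writing $x=n+\sigma$ with $n\in N(L)$, $\sigma\in\Sigma$ gives $x^2=[n,\sigma]+[\sigma,n]$ because $[n,n]=0$ and $[\sigma,\sigma]=0$, so $I$ is spanned by these symmetrisations.

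For the forward direction, assume $L$ is $\phi$-free, so $N(L)$ is abelian by Theorem \ref{phifree}(ii). Write $L=N(L)\dot{+}\Sigma$ with $\Sigma=C\oplus S$ as in Lemma \ref{red} and decompose the completely reducible bimodule $N(L)=\bigoplus_i W_i$ into irreducibles. I would apply the dichotomy from the proof of Lemma \ref{red}: each $W_i$ either satisfies $[\sigma,w]=-[w,\sigma]$ (a ``Lie-type'' summand) or satisfies $[\Sigma,W_i]=0$. The symmetrisation $[w,\sigma]+[\sigma,w]$ vanishes on the Lie-type summands and equals $[w,\sigma]$ on the others, so $I$ is precisely the sum of those $W_i$ with $[\Sigma,W_i]=0$ and nontrivial right action, while the complementary sum $A$ (the Lie-type summands together with the central ones) gives $N(L)=A\oplus I$. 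By construction $I$ is a completely reducible $\Sigma$-bimodule with $[I,\Sigma]=I$ (each surviving $W_i$ being an irreducible nontrivial right module), and $\Lambda:=\Sigma\dot{+}A$ is closed under multiplication with every square zero, hence is a Lie subalgebra with $L=\Lambda\dot{+}I$.

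It then remains to verify that $\Lambda$ is a $\phi$-free almost-algebraic Lie algebra with nilradical $A$. Once $N(\Lambda)=A$ is known, $\Lambda=A\dot{+}\Sigma$ is an almost-reductive Lie algebra, so almost-algebraicity follows from Theorem \ref{t:aa}, and $\phi$-freeness follows from Theorem \ref{phifree} since $A$ is abelian and equals $\mathrm{Asoc}(\Lambda)$. The main obstacle is exactly the identification $N(\Lambda)=A$. Here $A\subseteq N(\Lambda)$ is clear, and since $\Lambda/A\cong\Sigma$ has nilradical $C$ one gets $N(\Lambda)\subseteq A+C$; because each $c\in C$ acts semisimply on $N(L)$ (Lemma \ref{red}), so that $\mathrm{ad}_c|_A$ is semisimple, the only way $c$ can lie in $N(\Lambda)$ is to act trivially on $A$. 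Thus the delicate point is to exclude a nonzero $c\in C$ acting nontrivially on $I$ but trivially on $A$; this is where the $\phi$-free hypothesis, equivalently that $N(L)=\mathrm{Asoc}(L)$ leaves no superfluous central direction for $\Sigma$, must be used to force $\Sigma$ to act faithfully enough on $A$, and it is the step I expect to require the most care.

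For the converse, assume the stated decomposition. Then $L=(A\oplus I)\dot{+}\Sigma=N(L)\dot{+}\Sigma$ with $N(L)=A\oplus I$; the summand $A$ is a completely reducible $\Sigma$-bimodule because $\Lambda$ is $\phi$-free reductive, so $\Sigma$ acts completely reducibly on $A=\mathrm{Asoc}(\Lambda)$ and, $\Lambda$ being a Lie algebra, a completely reducible right module is a completely reducible bimodule, while $I$ is completely reducible by hypothesis. Hence $N(L)$ is a completely reducible $\Sigma$-bimodule and $L$ is almost reductive. Finally $N(L)=A\oplus I$ is abelian, so Theorem \ref{phifree}(ii) gives that $L$ is $\phi$-free, completing the equivalence.
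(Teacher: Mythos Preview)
Your approach mirrors the paper's: decompose $N(L)$ into irreducible $\Sigma$-bimodules and use Barnes' dichotomy (\cite[Lemma 1.9]{barnes}) to separate the Lie-type summands (forming $A$) from those with $[\Sigma,W_i]=0$ (forming $I$). You are in fact more explicit than the paper about why $I$ coincides \emph{exactly} with the sum of the non-Lie-type summands, via the symmetrisation identity $x^2=[n,\sigma]+[\sigma,n]$; the paper's proof only argues one inclusion and gestures at the other.

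The step you single out as delicate --- the identification $N(\Lambda)=A$ --- is not addressed in the paper's proof at all; the paper simply constructs the decomposition and stops. Your instinct that this needs justification is correct, but your expectation that $\phi$-freeness will supply it is not. Take $L=Fw\oplus Fc$ with sole nonzero product $[w,c]=w$. This is a $\phi$-free almost-reductive Leibniz algebra with $N(L)=I=Fw$ and $\Sigma=Fc$; the construction yields $A=0$ and $\Lambda=Fc$, yet $N(\Lambda)=Fc\neq 0=A$, and no alternative complement $\Lambda$ to $I$ satisfies $N(L)=N(\Lambda)\oplus I$ either. So what you flagged is not a gap in your argument but in the corollary as literally stated: one must either relax ``with nilradical $A$'' (e.g.\ to ``with abelian nilradical'') or, equivalently, absorb into $A$ those $c\in C$ that centralise $A$, after which your argument and the paper's both go through. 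Your converse direction is correct and more carefully spelled out than the paper's one-line appeal to Theorem~\ref{phifree}(ii).
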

\begin{proof} First, let $L$ be $\phi$-free. Then $L= (A_1\oplus \ldots\oplus A_n)\dot{+}\Sigma$ where $\Sigma$ is as described in Lemma \ref{red} and $A_i$ is an abelian irreducible $\Sigma$-bimodule for each $1\leq i\leq n$. Suppose that $[a,\sigma]=-[\sigma,a]$ for all $a\in A_1\oplus \ldots\oplus A_r$, $\sigma\in \Sigma$, but that there is an $a_i\in A_i$ and a $\sigma_i \in \Sigma$ such that $[a_i,\sigma_i]\neq -[\sigma_i,a_i]$ for each $r+1\leq i\leq n$. Put $\Lambda=A_1\oplus \ldots\oplus A_r\dot{+}\Sigma$. Then $[A_i,\Sigma]$ is a $\Sigma$-bimodule, and so $[A_i,\Sigma]=A_i$ or $0$ for each $i$. Now $[\Sigma,A_i]=0$ for $r+1\leq i\leq n$, by \cite[Lemma 1.9]{barnes}, so $[A_i,\Sigma]=A_i$ for $r+1\leq i\leq n$, by the choice of $r$. If $a\in A_i$ and $\sigma\in \Sigma$, then $[a,\sigma]=[a,\sigma+[\sigma,a]\in I$ for $r+1\leq i\leq n$. It follows that $A_i=[A_i,\Sigma] \subseteq I$ for each $r+1\leq i\leq n$. Clearly, $A_{r+1}\oplus\ldots\oplus A_n\subseteq I$ since $\Lambda$ is a Lie algebra.
\par

Now let  $L=\Lambda \dot{+} I$, where $\Lambda=\Sigma\dot{+}A$  is a $\phi$-free almost-algebraic Lie algebra with nilradical $A$, $N(L)=A\oplus I$, $[I,\Sigma]=I$ and $I$ is a completely reducible $\Sigma$-bimodule. Then $L$ is $\phi$-free since its nilradical is abelian..
\end{proof}
\medskip

The following result is a generalisation of \cite[Theorem 6 and its Corollary]{ao}.

\begin{coro}\label{lie1} Every $\phi$-free Leibniz algebra in which $I\subseteq Z(L)$ is a Lie algebra; in particular, every $\phi$-free symmetric Leibniz algebra is a Lie algebra.
\end{coro}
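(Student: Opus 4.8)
The plan is to read Corollary~\ref{arphifree} as a complete description of what $I$ must look like inside a $\phi$-free almost-reductive algebra, and then to observe that centrality of $I$ is flatly incompatible with the relation $[I,\Sigma]=I$ that this structure forces. First I would record that a $\phi$-free Leibniz algebra is almost reductive by Theorem~\ref{phifree}(i), so that Corollary~\ref{arphifree} applies and gives a decomposition $L=\Lambda\dot{+}I$ in which $\Lambda=\Sigma\dot{+}A$ is a $\phi$-free almost-algebraic Lie algebra, $N(L)=A\oplus I$, and, crucially, $[I,\Sigma]=I$.

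Next I would bring in the hypothesis. Since $I\subseteq Z(L)$ we have $[I,L]=0$, and in particular $[I,\Sigma]\subseteq[I,L]=0$. Comparing this with $[I,\Sigma]=I$ forces $I=0$, so $L=\Lambda$ is a Lie algebra and the first assertion is proved. I expect no real difficulty here: once the Corollary is invoked the argument is a one-line comparison of two descriptions of $[I,\Sigma]$.

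For the ``in particular'' I would verify that a symmetric Leibniz algebra automatically satisfies $I\subseteq Z(L)$, so that the first part applies. Fix $x\in L$. The right Leibniz identity with its inner two entries equal to $x$ gives $[y,x^2]=[y,[x,x]]=[[y,x],x]-[[y,x],x]=0$ for every $y$, so $R_{x^2}=0$; this holds in any right Leibniz algebra. Using that $L$ is also left Leibniz, the left identity with its first two entries equal to $x$ gives $[x,[x,y]]=[[x,x],y]+[x,[x,y]]$, whence $[x^2,y]=[[x,x],y]=0$ and $L_{x^2}=0$. Therefore $x^2\in Z(L)$ for every $x$.

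The only point that needs care is the passage from ``every square is central'' to ``the ideal $I$ generated by the squares is central''. This is immediate because $Z(L)$ is itself a two-sided ideal of $L$, every product involving a central element being zero, so the ideal $I$ generated by the squares is contained in $Z(L)$. With $I\subseteq Z(L)$ in hand, the first part yields $I=0$ and $L$ is a Lie algebra. I regard the short computation establishing centrality of squares as the only substantive step; everything else is bookkeeping resting on Corollary~\ref{arphifree}.
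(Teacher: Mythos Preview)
Your argument is correct and is exactly the approach the paper takes: invoke Corollary~\ref{arphifree} (via Theorem~\ref{phifree}(i)) to obtain $[I,\Sigma]=I$, then use $I\subseteq Z(L)$ to force $I=0$; the paper merely compresses this into the single line ``It follows from Corollary~\ref{arphifree} that $I=0$.'' Your verification that $I\subseteq Z(L)$ in a symmetric Leibniz algebra is also fine, and in fact since $I$ is already the linear span of the squares, the passage from ``every square is central'' to ``$I\subseteq Z(L)$'' needs only that $Z(L)$ is a subspace.
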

\begin{proof} Let $L$ be a $\phi$-free Leibniz algebra in which $I\subseteq Z(L)$. It follows from Corollary \ref{arphifree} that $I=0$, and so $L$ is a Lie algebra. 
\end{proof}

\begin{definition} The {\bf right centre} of a Leibniz algebra is the set $Z_r(L)=\{z\in L \mid [x,z]=0 \hbox{ for all } x\in L\}$.
\end{definition}

For any (right) Leibniz algebra $L$, $Z_r(L)$ is an abelian ideal of $L$ (see \cite[Proposition 2.9]{feld}). A special case of the above result is the following.

\begin{propo}(cf. \cite[Theorem 6 and its Corollary]{ao}) If $L/Z_r(L)$ is semisimple and $\dim Z_r(L)=1$, then $L$ is a Lie algebra.
\end{propo}
\begin{proof} By Levi's Theorem for Leibniz algebras, $L=Z_r(L)\dot{+} S$ where $S$ is a semisimple Lie algebra. Let $Z_r(L)=Fz$ and let $s_1,s_2\in S$. Then $[z,s_i]=\lambda_i z$ for $i=1,2$ and $[z,[s_1,s_2]]=[[z,s_1],s_2]-[[z,s_2],s_1]=\lambda_1\lambda_2 z - \lambda_2\lambda_1 z=0$. Thus $[Z_r,L]=[Z_r,S]=[Z_r,S^2]=0$, whence $Z_r(L)=Z(L)$. It is now clear that $L$ is a Lie algebra. Note that $L$ is $\phi$-free and $I=0$, so this is a special case of Corollary \ref{lie1}.
\end{proof}

\begin{lemma}\label{frac} Let $B$ be a subalgebra of a Leibniz algebra $L$. If $B$ is almost algebraic, then so the Lie algebra $(B+I)/I$.
\end{lemma}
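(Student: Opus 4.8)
The plan is to identify $(B+I)/I$ as a homomorphic image of the liesation $B/I_B$ of $B$, and then to appeal to the fact that homomorphic images of almost-algebraic Lie algebras are again almost algebraic. Write $I_B=\langle\{x^2:x\in B\}\rangle$ for the ideal of $B$ whose quotient $B/I_B$ is the liesation of $B$; thus $B$ being almost algebraic means precisely that $B/I_B$ is an almost-algebraic Lie algebra. Since $I$ is an ideal of $L$, the subspace $B+I$ is a subalgebra of $L$ and $(B+I)/I$ is a subalgebra of the liesation $L/I$; in particular it is a genuine Lie algebra, so the statement at least makes sense.

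First I would apply the second isomorphism theorem to obtain $(B+I)/I\cong B/(B\cap I)$, noting that $B\cap I$ is an ideal of $B$ because $I$ is an ideal of $L$. Next I would check that $I_B\subseteq B\cap I$: for each $x\in B$ we have $x^2=[x,x]\in B$ since $B$ is a subalgebra, and $x^2\in I$ by the definition of $I$, so every generator $x^2$ of $I_B$ lies in the ideal $B\cap I$ of $B$, whence $I_B\subseteq B\cap I$. Consequently $B/(B\cap I)$ is a homomorphic image of $B/I_B$, and therefore so is $(B+I)/I$.

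It remains to invoke the closure of the class of almost-algebraic Lie algebras under homomorphic images: since $B/I_B$ is almost algebraic, its quotient $(B+I)/I$ is almost algebraic as well, which is exactly the assertion. The one point requiring care---and the step I expect to be the crux---is this closure property. In the abstract setting it follows because, for an ideal $K$ of an almost-algebraic Lie algebra $M$, each $\mathrm{ad}(x)$ leaves $K$ invariant and hence induces the operator $\mathrm{ad}_{M/K}(\bar x)$ on $M/K$; the Jordan components of the induced operator are the operators induced by the Jordan components of $\mathrm{ad}(x)$, and these lie in $\mathrm{ad}(M/K)$ precisely because they lie in $\mathrm{ad}(M)$ by almost-algebraicity of $M$. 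This is a standard property (cf. \cite{further}), so the proof reduces to the two elementary observations above.
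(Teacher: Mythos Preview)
Your argument is correct and follows essentially the same route as the paper: both use $(B+I)/I\cong B/(B\cap I)\cong (B/I_B)\big/\big((B\cap I)/I_B\big)$ and then pass to the quotient of the almost-algebraic Lie algebra $B/I_B$. The only cosmetic difference is that the paper observes $(B\cap I)/I_B$ is abelian (hence almost algebraic) and invokes \cite[Lemma~4.1]{ab} on quotients by almost-algebraic ideals, whereas you argue directly that any quotient of an almost-algebraic Lie algebra is almost algebraic; your sketch of that closure property is sound, though the natural citation is \cite{ab} rather than \cite{further}.
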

\begin{proof}  Let $J$ be the Leibniz kernel of $B$. Then $B/J$ is almost algebraic. But now
\[ \frac{B+I}{I} \cong \frac{B}{B\cap I} \cong \frac{B/J}{(B\cap I)/J},
\] and $(B\cap I)/J$ is abelian and so is almost algebraic. The result then follows from \cite[Lemma 4.1]{ab}.
\end{proof}

\begin{theor}\label{rad1} Let $L$ be a Leibniz algebra with radical $\Gamma$. Then,
\begin{itemize}
\item[(i)] if $\Gamma$ is almost algebraic, then so is $L$; and
\item[(ii)] if $L$ is almost reductive, then so is $\Gamma$.
\end{itemize}
\end{theor}
\begin{proof} (i) If $\Gamma$ is the radical of $L$, $\Gamma/I$ is the radical of $L/I$. Let $\Gamma$ be almost algebraic. Then $\Gamma/I$ is almost algebraic, by Lemma \ref{frac}. It follows from \cite[Corollary 3.1]{ab} that $L/I$, and hence $L$, is almost algebraic. 
\par

(ii) This is clear from Lemma \ref{red}.
\end{proof}
\medskip

For Lie algebras the converse is true. However, it appears that this may not be the case even for symmetric Leibniz algebras, though examples are not easy to construct. The best that we can achieve at the moment is given by the following two results.

\begin{theor}\label{symm1} Let $L$ be a almost-algebraic symmetric Leibniz algebra with radical $\Gamma$ and nilradical $N$. Then $L=N+\Sigma$, where $N\cap \Sigma=I$ and $\Sigma=\Gamma\cap\Sigma\oplus S$ with $(\Gamma\cap\Sigma)^3=0$, $S$ semisimple and $R_{c+I}$ acting semisimply on $N/I$ for all $c\in \Gamma\cap \Sigma$. 
\end{theor}
\begin{proof} We have that $L=\Gamma\dot{+}S$, where $S$ is a semisimple Lie algebra, by Levi's Theorem for Leibniz algebras. Also, $L/I$ is almost reductive. Thus, $L/I=N/I\dot{+}\Sigma/I$ where $\Sigma$ is a subalgebra of $L$, $(\Gamma\cap\Sigma)/I$ is abelian, $\Sigma/I=(\Gamma\cap\Sigma)/I\oplus (S\dot{+}I)/I$ with $S$ a semismple Lie algebra and $R_{c+I}$ acting semisimply on $N/I$ for all $c\in \Gamma\cap \Sigma$. Hence $L=N+\Sigma$ where $N\cap\Sigma=I$ and $(\Gamma\cap\Sigma)^2\subseteq I$, so $(\Gamma\cap\Sigma)^3=0$. Moreover,
\begin{align*}
[\Gamma\cap\Sigma,S]& = [\Gamma\cap\Sigma,S^2]\subseteq [[\Gamma\cap\Sigma,S],S]\subseteq [I,S]=0, \hbox{ and } \\
[S,\Gamma\cap\Sigma]& = [S^2,\Gamma\cap\Sigma]\subseteq [S,[S,\Gamma\cap\Sigma]]+[[S,\Gamma\cap\Sigma],S] \\
& \subseteq [S,I]+[I,S]=0.
\end{align*}
\end{proof}

\begin{theor}\label{symm2} Let $L$ be a symmetric Leibniz algebra with an almost-reductive radical $\Gamma$. Then $L$ is as described in Theorem \ref{symm1} above and $\Gamma=N\dot{+}C$ where $C$ is an abelian subalgebra and $R_c\mid_N$ is semisimple for all $c\in C$. 
\end{theor}
\begin{proof} Suppose that $\Gamma$ is almost reductive, so that $\Gamma=N\dot{+}C$ where $C$ is an abelian subalgebra and $R_c\mid_N$ is semisimple for all $c\in C$. Moreover, $\Gamma$ is almost-algebraic, by Theorem \ref{t:aa}, and hence so is $L$, by Theorem \ref{rad1}(i). The result follows.
\end{proof}

\begin{propo}\label{p:fac} Let $L$ be a Leibniz algebra $L$. 
\begin{itemize}
\item[(i)] If $L$ is almost algebraic and $J$ is an almost-algebraic ideal of $L$, then $L/J$ is almost algebraic.
\item[(ii)] If $L$ is almost reductive and $J$ is an ideal of $L$ with $J\subseteq \phi(L)$, then $L/J$ is almost reductive.
\end{itemize}
\end{propo}
\begin{proof} (i) It follows from Lemma \ref{frac} and \cite[Lemma 4.1]{ab} that $(J+I)/I$ and hence $L/(J+I)\cong (L/I)/((J+I)/I)$ is an almost-algebraic Lie algebra. But $(J+I)/J$ is the Leibniz kernel of $L/J$ and $(L/J)/((J+I)/J)\cong L/(J+I)$. Thus $L/J$ is almost algebraic.
\par

\noindent (ii) We have that $N(L/J)=N(L)/J$ as in \cite[Lemma 2.3]{nil}. Let $N(L)=A_1\oplus \ldots \oplus A_n$ where $A_i$ is an irreducible $\Sigma$-bimodule of $L$. Then $A_i\cap J= 0$ or $A_i$ for each $i=1,\ldots,n$. Let $A_1\cap J=\ldots=A_r\cap J=0$, $J=A_{r+1}\oplus \ldots \oplus A_n$, so $L/J\cong (A_1\oplus \ldots \oplus A_r)\dot{+} \Sigma$, which is almost reductive. 
\end{proof}

\begin{coro}\label{c:aalg} Let $L$ be an almost-reductive Leibniz algebra. Then $\phi(L) = N^2$, where $N$ is the nilradical of $L$.
\end{coro}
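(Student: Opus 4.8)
The plan is to prove the two inclusions $N^2\subseteq\phi(L)$ and $\phi(L)\subseteq N^2$ separately; I expect the first to carry the real content, with the second a short deduction from the machinery already in place.

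I would begin with $N^2\subseteq\phi(L)$. First note that $N^2=[N,N]$ is an ideal of $L$: since $N=N(L)$ is an ideal, left and right multiplication by any element of $L$ carry $N$ into $N$, and feeding these into the two Leibniz identities applied to a product $[n_1,n_2]$ keeps the result in $[N,N]$ (recall $R_x$ is a derivation). To show $N^2\subseteq\phi(L)$ it then suffices to show $N^2\subseteq M$ for every maximal subalgebra $M$ of $L$, since $\phi(L)$ is the largest ideal contained in all maximal subalgebras. If $N\subseteq M$ this is clear. Otherwise $M+N^2$ is a subalgebra (as $N^2$ is an ideal), so by maximality either $N^2\subseteq M$ or $M+N^2=L$, and I would rule out the latter. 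If $M+N^2=L$ then, since $N^2\subseteq N$, the modular law gives $N=(M\cap N)+N^2$. The key nilpotent fact is that $N^2\subseteq\phi(N)$: as $N$ is nilpotent, every maximal subalgebra of $N$ is an ideal of codimension one and hence contains $N^2$. Therefore $(M\cap N)+\phi(N)=N$ forces $M\cap N=N$ by the non-generator property of the Frattini ideal, i.e. $N\subseteq M$, contradicting the present case. Hence $N^2\subseteq M$, and so $N^2\subseteq\phi(L)$.

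For the reverse inclusion I would exploit the results already proved. Since $N^2\subseteq\phi(L)$, Proposition \ref{p:fac}(ii) applies with $J=N^2$ and shows that $L/N^2$ is almost reductive. Its nilradical is $N/N^2$: indeed $N/N^2\subseteq N(L/N^2)$, and the preimage of $N(L/N^2)$ in $L$ is a nilpotent ideal (an extension of the nilpotent $N^2$ by a nilpotent ideal), hence contained in $N$, giving equality. As $N/N^2$ is abelian, Theorem \ref{phifree}(ii) yields $\phi(L/N^2)=0$. Finally, because $N^2\subseteq\phi(L)$ is an ideal, the standard identity $\phi(L/N^2)=\phi(L)/N^2$ gives $\phi(L)/N^2=0$, that is $\phi(L)=N^2$.

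The main obstacle is the inclusion $N^2\subseteq\phi(L)$, and within it the nilpotent fact $N^2\subseteq\phi(N)$ together with the non-generator property of $\phi(N)$; everything else is either bookkeeping or a direct appeal to Proposition \ref{p:fac}(ii) and Theorem \ref{phifree}(ii). I would want to confirm that the Leibniz (two-sided) analogues of ``maximal subalgebras of a nilpotent algebra are ideals'' and ``$\phi$ consists of non-generators'' are available in the cited references, since these underpin the crucial case distinction.
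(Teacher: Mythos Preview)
Your proposal is correct and follows essentially the same route as the paper: establish $N^2\subseteq\phi(L)$ via the Frattini theory of the nilpotent ideal $N$, then pass to $L/N^2$, observe it is almost reductive with abelian nilradical, and apply Theorem~\ref{phifree}(ii) to get $\phi(L/N^2)=0$. The only difference is cosmetic: where the paper cites \cite[Theorem 6.5]{frat} for $N^2=\phi(N)\subseteq\phi(L)$ and \cite[Lemma 2.3]{nil} for $N(L/N^2)=N/N^2$, you unpack these arguments directly.
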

\begin{proof} First, $N^2=\phi(N)\subseteq \phi(L)$, as in \cite[Theorem 6.5]{frat}. Hence $N(L/N^2)=N/N^2$, by \cite[Lemma 2.3]{nil}, giving that  $N(L/N^2)$ is abelian. Moreover, $L/N^2$ is almost reductive, by Proposition \ref{p:fac} (ii), and so $L/N^2$ is $\phi$-free, by Theorem \ref{phifree} (ii). It
follows that $\phi(L) \subseteq N^2$. 
\end{proof}
\bigskip

Note that the above Corollary is false if 'almost-reductive' is replaced by `almost-algebraic', as the following example shows.

\begin{ex} Let $L$ be as in Example \ref{ex1}. Then the only maximal subalgebras are $I$ and $F(a-a^2)+F(a^2-a^3)+F(a^3-a^4)$ (see \cite[proof of Proposition 6.1]{st}. Hence $\phi(L)= F(a^2-a^3)+F(a^3-a^4)\neq 0=N^2$.
\end{ex}

Once again, it is not even true if $L$ is a symmetric Leibniz algebra.

\begin{ex} Let $L$ be as in Example \ref{ex2}. Then $\phi(L)=Fe_3\neq 0 = N^2$.
\end{ex}

\begin{propo}  Let $L$ be an almost-reductive symmetric Leibniz algebra. If every almost-algebraic subalgebra of $L$ is $\phi$-free, then $L$ is an elementary Lie algebra.
\end{propo}
\begin{proof} Since $\phi(L)=0$, we have that $L$ is a Lie algebra, by Corollary \ref{lie1}. The result now follows from \cite[Proposition 2.3]{further}.
\end{proof}

\begin{propo}\label{elem}  Let $L$ be an almost-reductive symmetric Leibniz algebra. If every almost-algebraic subalgebra of $L/I$ is $\phi$-free, then $\phi(L)=N^2=I$ and $L$ is an $E$-algebra.
\end{propo}
\begin{proof} We have that $L/I$ is almost algebraic, by Theorem \ref{t:aa}, so $N^2=\phi(L)\subseteq I$, by Corollary \ref{c:aalg}. Now, if $M$ is a maximal subalgebra of $L$ with $Z(L)\not \subseteq M$, $L=M+Z(L)$ which gives that $L^2\subseteq M$. Hence $Z(L)\cap L^2\subseteq \phi(L)$. Thus 
\[ N^2\subseteq I\subseteq Z(L)\cap L^2\subseteq \phi(L)=N^2.
\]
 Let $B$ be a subalgebra of $L$. Then $I_{L/I}((B+I)/I)$ is almost algebraic, by \cite[Theorem 2.3]{ab}, and so is $\phi$-free, by assumption. It follows that $(B+I)/I$ is $\phi$-free, by \cite[Lemma 4.1]{frat}, whence, $\phi(B)\subseteq I=\phi(L)$ and $L$ is an $E$-algebra.
\end{proof}

\section{The inner derivation algebra of a Leibniz algebra}

\begin{definition} The {\bf inner derivation algebra} of $L$ is the set $R(L)=\{R_x\mid x\in L\}$. 
\end{definition}

Note that $R(L)$ is a Lie algebra under bracket product.  For every subset $U$ of $L$ we will write $R_U=\{R_x\mid x\in U\}$. It is easy to check that $R_{[y,x]}=[R_x,R_y]$. To simplify notation, put $[y,_nx]=R_x^n(y)$. Then we have

\begin{lemma}\label{1} $R_{[y,_nx]}=(-1)^n [R_y,_{n-1}R_x]$.
\end{lemma}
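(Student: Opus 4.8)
The plan is to induct on $n$, the only ingredient being the bilinear identity $R_{[a,x]}=[R_x,R_a]$ recorded just before the statement (namely $R_{[y,x]}=[R_x,R_y]$ with $y$ replaced by an arbitrary $a\in L$). The mechanism is this: raising the index, $[y,_{n+1}x]=R_x([y,_nx])=[[y,_nx],x]$, corresponds under $R$ to bracketing on the \emph{left} by $R_x$, whereas $[R_y,_mR_x]$ is a left-normed bracket formed by attaching copies of $R_x$ on the \emph{right}; reconciling these two directions through the antisymmetry $[R_x,Z]=-[Z,R_x]$ in the Lie algebra $R(L)$ is precisely what manufactures the factors of $-1$.

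Concretely, for the inductive step I would write $[y,_{n+1}x]=[[y,_nx],x]$ and apply the identity with $a=[y,_nx]$ to obtain
\[
R_{[y,_{n+1}x]}=[R_x,R_{[y,_nx]}].
\]
Substituting the inductive hypothesis for $R_{[y,_nx]}$, extracting the scalar sign from the bracket, and then swapping $[R_x,Z]=-[Z,R_x]$ to push $R_x$ to the outside right, I turn the expression into the next left-normed bracket in $R_x$ while gaining one further factor of $-1$. Thus each step both lengthens the bracket by one copy of $R_x$ and flips the sign, which is the engine behind the power $(-1)^n$.

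The delicate point, and the step I would check most carefully, is the bookkeeping that ties the exponent of $-1$ to the number of copies of $R_x$ appearing in the left-normed bracket. Starting from $R_{[y,_1x]}=[R_x,R_y]=-[R_y,R_x]$, a naive induction produces after $n$ steps a factor $(-1)^n$ multiplying a left-normed bracket containing $n$ copies of $R_x$, so I would scrutinise the index convention on $[R_y,_mR_x]$ to confirm that it synchronises with the $n-1$ displayed in the statement; a single off-by-one in the direction of bracketing, or in counting the $R_x$ factors, is exactly what would throw off either the index or the sign.
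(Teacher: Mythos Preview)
Your proposal is correct and follows essentially the same route as the paper: induction on $n$, with the inductive step unwinding $R_{[y,_{n+1}x]}=R_{[[y,_nx],x]}$ via the identity $R_{[a,x]}=[R_x,R_a]=-[R_a,R_x]$ and then invoking the hypothesis. Your caution about the index convention on $[R_y,_{m}R_x]$ is well placed, since the paper's base case reads $R_{[y,x]}=-[R_y,R_x]$, i.e.\ interprets $[R_y,_{0}R_x]$ as $[R_y,R_x]$, and keeping that convention straight is the only real subtlety.
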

\begin{proof} We use induction on n. If $n=1$ we have  $$R_{[y,x]}=[R_x,R_y]=-[R_y,R_x].$$
\par

So suppose it holds for $n=k$. Then
\begin{align*}
 R_{[y,_{k+1}x]}=R_{[[y,_k x],x]}=-[R_{[y,_k x]},R_x]& =(-1)^{k+1}[[R_y,_{k-1} R_x],R_x] \\
& =(-1)^{k+1}[R_y,_k R_x].
\end{align*}
\end{proof}

\begin{definition} For any algebra $A$, the {\bf opposite algebra}, $A^{\circ}$, has the same underlying vector space and the opposite multiplication, $(x,y)\mapsto x\star y =yx$, where juxtaposition denotes the multiplication in $A$.
\end{definition}

The following is easy to check (see, for example, \cite[Proposition 2.26]{feld}).

\begin{propo}\label{factor} For any (right) Leibniz algebra $L$, the map $\theta : L \rightarrow R(L)^{\circ} : x\mapsto R_x$ is a homomorphism with kernel $Z_r(L)$, so the Lie algebra $L/Z_r(L)$ is isomorphic to $R(L)^{\circ}$.
\end{propo}

The following lemma is easy to see.

\begin{lemma}\label{op} For any Lie algebra $L$,
\begin{itemize}
\item[(i)] $U$ is a subalgebra of $L$  if and only if $U^{\circ}$ is a subalgebra of $L^{\circ}$;
\item[(ii)]  $U$ is an ideal of $L$  if and only if $U^{\circ}$ is an ideal of $L^{\circ}$;
\item[(iii)] $U$ is solvable if and only if $U^{\circ}$ is solvable; 
\item[(iv)] $U$ is nilpotent if and only if $U^{\circ}$ is nilpotent; and
\end{itemize}
\end{lemma}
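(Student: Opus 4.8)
The plan is to prove all four equivalences simultaneously by exploiting the single structural fact that the opposite algebra $L^{\circ}$ has the same underlying vector space as $L$, with multiplication $x\star y=[y,x]$. The key observation I would record at the outset is that for any \emph{subspace} $U$, the set-theoretic identity $U^{\circ}=U$ holds (the opposite algebra only changes the product, not the points), and that $U\star U$ spans the same subspace as $[U,U]$, since $x\star y=[y,x]$ merely reverses the roles of the two arguments inside the bracket. From this it follows immediately that $U^{\circ}$ is closed under $\star$ precisely when $U$ is closed under $[\,\cdot\,,\cdot\,]$, which gives (i); and that $[L^{\circ},U^{\circ}]_{\star}=U^{\circ}\star L^{\circ}\cup L^{\circ}\star U^{\circ}$ coincides as a subspace with $[U,L]\cup[L,U]$, which gives the ideal equivalence (ii). Since $L$ is a Lie algebra here, one has $[x,y]=-[y,x]$, so $x\star y=-[x,y]$ and the two products differ only by a sign; this makes every subspace-level statement completely symmetric in $L$ and $L^{\circ}$.

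For (iii) and (iv) I would argue by comparing the respective derived and lower central series term by term. The cleanest route is to prove the single claim $(U^{(k)})^{\circ}=(U^{\circ})^{(k)}$ and $(U^{k})^{\circ}=(U^{\circ})^{k}$ as subspaces, by induction on $k$. The base case $k=1$ is the identity $U^{\circ}=U$ noted above. For the inductive step one uses that, because $\star$ differs from the original bracket only by reversal (and, in the Lie setting, by a sign), $[U^{\circ},U^{\circ}]_{\star}$ and $[U,U]$ span the same subspace, and likewise $[U^{\circ},L^{\circ}]_{\star}$ and $[U,L]$; feeding the inductive hypothesis through one step of either series then yields the matching term for $k+1$. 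Once the two series agree subspace-by-subspace, $U^{(n)}=0$ if and only if $(U^{\circ})^{(n)}=0$, giving (iii), and identically $U^{n}=0$ if and only if $(U^{\circ})^{n}=0$, giving (iv).

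I do not anticipate a genuine obstacle here: the lemma is essentially a bookkeeping statement, and the author's own remark that it ``is easy to see'' reflects this. The one point that needs care, rather than difficulty, is keeping the sign conventions straight—since $L$ is assumed to be a Lie algebra, $x\star y = [y,x] = -[x,y]$, and one must check that the overall sign never affects whether a subspace is zero or whether a product lands inside a given subspace (it never does, since scaling by $-1$ preserves subspaces). The only other thing worth flagging is to make sure the induction in (iii)--(iv) is set up so that both the derived series and the lower central series are handled, as the statement asserts solvability and nilpotency separately; but the two inductions are formally identical, differing only in whether the second factor is $U$ or $L$ at each stage.
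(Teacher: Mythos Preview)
Your proposal is correct; the argument via $x\star y=-[x,y]$ and the matching of derived/lower-central series subspace-by-subspace is exactly the straightforward verification one expects. The paper itself offers no proof beyond the remark that the lemma ``is easy to see,'' so there is nothing to compare---your write-up simply fills in the routine details the author omitted.
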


\begin{lemma}\label{2} For every Leibniz algebra $L$ we have
\begin{itemize}
\item[(i)] If $U$ is a subalgebra of $L$ then $R_U$ is a subalgebra of $R(L)$;
\item[(ii)] Every subalgebra of $R(L)$ is of the fom $R_U$ where $U$ is a subalgebra of $L$.
\item[(iii)] If $U$ is an ideal of $L$ then $R_U$ is an ideal of $R(L)$.
\item[(iv)]  Every ideal of $R(L)$ is of the fom $R_U$ where $U$ is an ideal of $L$.
\end{itemize}
\end{lemma}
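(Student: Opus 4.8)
The plan is to deduce all four parts from Proposition \ref{factor} together with Lemma \ref{op}. Proposition \ref{factor} provides the map $\theta : L \rightarrow R(L)^{\circ} : x\mapsto R_x$, which is a homomorphism with kernel $Z_r(L)$; moreover it is surjective, since $R(L)=\{R_x\mid x\in L\}$ by definition, and $R_U=\theta(U)$ for every subset $U$ of $L$. The subtlety is that $\theta$ lands in the \emph{opposite} algebra $R(L)^{\circ}$ rather than in $R(L)$ itself — this is forced by the order reversal in $R_{[y,x]}=[R_x,R_y]$. Lemma \ref{op}, applied to the Lie algebra $R(L)$, tells us that a subset $W$ of $R(L)$ is a subalgebra (resp.\ ideal) of $R(L)$ exactly when it is a subalgebra (resp.\ ideal) of $R(L)^{\circ}$. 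With these two ingredients the four parts split into two ``image'' statements, (i) and (iii), and two ``preimage'' statements, (ii) and (iv).

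For (i), if $U$ is a subalgebra of $L$ then $\theta(U)=R_U$ is a subalgebra of $R(L)^{\circ}$, being the image of a subalgebra under a homomorphism; Lemma \ref{op}(i) then gives that $R_U$ is a subalgebra of $R(L)$. For (iii), if $U$ is a (two-sided) ideal of $L$, then using surjectivity of $\theta$ I would check directly that $\theta(U)=R_U$ is an ideal of $R(L)^{\circ}$: for $\theta(l)\in R(L)^{\circ}$ and $\theta(u)\in\theta(U)$ one has $\theta(l)\star\theta(u)=\theta([l,u])\in\theta(U)$ and $\theta(u)\star\theta(l)=\theta([u,l])\in\theta(U)$, since both $[l,u]$ and $[u,l]$ lie in $U$. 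Lemma \ref{op}(ii) then upgrades this to $R_U$ being an ideal of $R(L)$.

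For (ii) and (iv), I would argue by taking preimages. Given a subalgebra (resp.\ ideal) $W$ of $R(L)$, Lemma \ref{op} shows $W$ is also a subalgebra (resp.\ ideal) of $R(L)^{\circ}$. Set $U=\theta^{-1}(W)$. The preimage of a subalgebra (resp.\ ideal) under the homomorphism $\theta$ is a subalgebra (resp.\ two-sided ideal) of $L$ containing $\ker\theta=Z_r(L)$, so $U$ is a subalgebra (resp.\ ideal) of $L$; here, to obtain a genuine two-sided ideal in (iv), I would verify both $[l,u]\in U$ and $[u,l]\in U$ for $u\in U,\ l\in L$ from $\theta(l)\star\theta(u),\ \theta(u)\star\theta(l)\in W$. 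Finally, surjectivity of $\theta$ yields $\theta(U)=\theta(\theta^{-1}(W))=W$, that is, $W=R_U$.

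I expect no serious obstacle here, only bookkeeping. The one place that genuinely requires attention is the passage through $R(L)^{\circ}$: the image $\theta(U)$ is a priori only a subalgebra or ideal of the opposite algebra, and it is Lemma \ref{op} that transfers these conclusions back to $R(L)$; correspondingly, surjectivity of $\theta$ must be used both to ensure images of ideals are ideals and to guarantee $\theta(\theta^{-1}(W))=W$ in (ii) and (iv).
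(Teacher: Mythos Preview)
Your proposal is correct. The approach differs in presentation from the paper's: the paper argues by direct computation from the single identity $[R_x,R_y]=R_{[y,x]}$, never invoking Proposition \ref{factor} or Lemma \ref{op} explicitly. For (i) and (iii) it just checks closure of $R_U$ under linear combinations and brackets by hand; for (ii) and (iv) it sets $U=\{x\in L\mid R_x\in K\}$ and checks this is a subalgebra (resp.\ ideal) by the same identity. Your route packages exactly these computations into the statement that $\theta:L\to R(L)^{\circ}$ is a surjective homomorphism, then uses Lemma \ref{op} to pass between $R(L)^{\circ}$ and $R(L)$. The gain of your approach is conceptual clarity---one sees at once why the order reversal $[R_x,R_y]=R_{[y,x]}$ is harmless, since subalgebras and ideals are insensitive to passing to the opposite Lie algebra---while the paper's gain is brevity and self-containment, needing nothing beyond the bracket identity itself. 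Substantively the two arguments coincide: your $\theta^{-1}(W)$ is precisely the paper's $\{x\in L\mid R_x\in K\}$.
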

\begin{proof}\vspace{-.3cm}
\begin{align*}
\hbox{(i) } U \hbox{ is a subalgebra of } L & \Rightarrow \lambda x + \mu y, [x,y]\in U \hbox{ for all } x,y\in U, \lambda,\mu\in F \\
& \Rightarrow \lambda R_x + \mu R_y = R_{\lambda x + \mu y}, [R_x,R_y]=R_{[y,x]}\in R_U.
\end{align*}
\noindent (ii) Let $K$ be a subalgebra of $R(L)$. Put $U=\{x\in L \mid R_x\in K\}$. Then $U$ is a subalgebra of $L$ as in (i).
\begin{align*}
\hbox{(iii) } U \hbox{ is an ideal of } L & \Rightarrow [x,y], [y,x]\in U \hbox{ for all } x\in U, y\in L \hspace{2.4cm} \\
& \Rightarrow [R_y,R_x], [R_x,R_y]=\pm R_{[x,y]}\in R_U.
\end{align*}
\noindent (iv) This is similar to (ii).
\end{proof}

\begin{lemma}\label{rad} Let $L$ be a Leibniz algebra. Then
\begin{itemize}
\item[(i)] $\Gamma$ is the radical of $L \Leftrightarrow R_{\Gamma}$ is the radical of $R(L)$;
\item[(ii)] If $Z_r(L)\subseteq \phi(L)$, then $N$ is the nilradical of $L\Leftrightarrow R_N$ is the nilradical of $R(L)$.
\end{itemize}
\end{lemma}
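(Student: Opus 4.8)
The plan is to prove both biconditionals by exploiting the homomorphism $\theta : L \to R(L)^{\circ}$ of Proposition \ref{factor}, whose kernel is $Z_r(L)$, together with the ideal/subalgebra correspondence of Lemma \ref{2} and the opposite-algebra preservation properties of Lemma \ref{op}. The guiding principle is that $R(L) \cong (L/Z_r(L))^{\circ}$, so radicals and nilradicals of $R(L)$ correspond, via $\theta$, to the images in $L/Z_r(L)$ of the corresponding ideals of $L$; the opposite is harmless since Lemma \ref{op}(iii),(iv) show that solvability and nilpotency are invariant under passing to $A^{\circ}$.

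For part (i), I would argue as follows. By Lemma \ref{2}(iii) applied to the ideal $\Gamma$, $R_{\Gamma}$ is an ideal of $R(L)$, and it is solvable because $\theta$ maps $\Gamma$ onto $R_{\Gamma}$ and solvability is preserved by homomorphic images (and by Lemma \ref{op}(iii), by the opposite). For maximality, note that any solvable ideal of $R(L)$ is $R_U$ for an ideal $U$ of $L$ by Lemma \ref{2}(iv); since $U/Z_r(L) \cong R_U$ is solvable and $Z_r(L)$ is abelian, hence solvable, $U$ is a solvable ideal of $L$, so $U \subseteq \Gamma$ and thus $R_U \subseteq R_{\Gamma}$. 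This shows $R_{\Gamma}$ is the largest solvable ideal of $R(L)$, i.e.\ its radical. The reverse implication follows by running the correspondence backwards: if $R_{\Gamma}$ is the radical of $R(L)$, then $\Gamma$ must be the radical of $L$, since the preimage $\theta^{-1}(R_{\Gamma}) = \Gamma + Z_r(L)$ is solvable (extension of the solvable $R_\Gamma$'s preimage structure by abelian $Z_r(L)$) and maximal among solvable ideals by the same correspondence.

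For part (ii), the extra hypothesis $Z_r(L) \subseteq \phi(L)$ is what makes the nilradical behave well. The key point is that, in general, $\theta$ need not carry the nilradical of $L$ to that of $R(L)$ because the abelian kernel $Z_r(L)$ could fail to be nilpotent-compatible with $N$; but the condition $Z_r(L) \subseteq \phi(L)$ forces $Z_r(L) \subseteq N(L)$, so the preimage of $R_N$ under $\theta$ is exactly $N$ (no spurious central part is added). I would show that $R_N$ is a nilpotent ideal of $R(L)$ via Lemma \ref{2}(iii) and the fact that $N/Z_r(L) \cong R_N$ is nilpotent as a homomorphic image of the nilpotent $N$, using Lemma \ref{op}(iv) for the opposite. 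Conversely, given a nilpotent ideal $R_U$ of $R(L)$, the preimage $U$ satisfies $U/(U \cap Z_r(L))$ nilpotent; since $Z_r(L) \subseteq \phi(L) \subseteq N(L)$ and a central-by-nilpotent (indeed $\phi$-extension) ideal is nilpotent, $U$ is a nilpotent ideal of $L$, hence $U \subseteq N$ and $R_U \subseteq R_N$, establishing maximality.

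The main obstacle I anticipate is justifying in part (ii) that a subalgebra $U$ with $U/(U\cap Z_r(L))$ nilpotent is itself nilpotent, which is precisely where $Z_r(L) \subseteq \phi(L)$ is indispensable: one needs the standard Leibniz-algebra fact that an ideal is nilpotent whenever its image modulo a central (or Frattini-contained) ideal is nilpotent. I would cite the analogue of \cite[Lemma 2.3]{nil} or the Frattini-argument used in the proof of Corollary \ref{c:aalg}, where $Z_r(L) \subseteq \phi(L)$ lets one lift nilpotency through the quotient. The cleanest framing is to observe that $Z_r(L) \subseteq \phi(N(L))$-type containment makes the nilradical stable under the correspondence, so the biconditional reduces to the already-established ideal dictionary of Lemma \ref{2} combined with this nilpotency-lifting step.
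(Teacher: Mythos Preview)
Your proposal is correct and follows essentially the same approach as the paper, which likewise transfers the radical and nilradical through the isomorphism $L/Z_r(L)\cong R(L)^{\circ}$ of Proposition \ref{factor} together with Lemmas \ref{op} and \ref{2}. The paper's argument is simply a terser version of yours: for (i) it records $\Gamma(L/Z_r(L))=\Gamma/Z_r(L)$ directly, and for (ii) it takes $K/Z_r(L)=N(L/Z_r(L))$ and cites \cite[Theorem 5.5]{barnes} for the nilpotency-lifting step you correctly singled out as the crux (note incidentally that $Z_r(L)\subseteq N$ holds automatically, since $Z_r(L)$ is an abelian ideal, so the hypothesis $Z_r(L)\subseteq\phi(L)$ is needed only for that lifting step, not for the containment).
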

\begin{proof} (i) Clearly, $\Gamma(L/Z_r(L))=\Gamma/Z_r(L)$ and so $\Gamma/Z_r(L)\cong\Gamma(R(L)^{\circ})=\Gamma(R(L))$. Moreover, $\theta\mid_{\Gamma}$ is a homomorphism from $\Gamma$ onto $R_{\Gamma}$, whence the result.
\par

\noindent (ii) Clearly, $Z_r(L)\subseteq N$ and so $N/Z_r(L)\subseteq N(L/Z_r(L))=K/Z_r(L)$, say. But $K$ is nilpotent, by \cite[Theorem 5.5]{barnes}, so $N/Z_r(L)=N(L/Z_r(L))$. The proof now follows in similar manner to (i).
\end{proof}

\begin{propo}\label{main}If the Leibniz algebra $L$ is  almost algebraic then so is the Lie algebra $R(L)$.
\end{propo}
\begin{proof} Let $L$ be almost algebraic. Then $L/I$ is almost algebraic. Now $L/Z_r(L)\cong (L/I)/(Z_r(L)/I)$ and $Z_r(L)/I$ is abelian and so is almost algebraic. It follows that $L/Z_r(L)$ is almost algebraic, by \cite[Lemma 4.1]{ab}. The result follows from Proposition \ref{factor}.
\end{proof}

\begin{definition}
If $B$ is a subalgebra of $L$, the {\bf idealiser of $B$ in $L$}, $I_L(B)=\{ x\in L \mid [x,b], [b,x]\in B \hbox{ for all } b\in B\}$.
\end{definition}

\begin{coro}\label{idealiser2}  Let $B$ be a subalgebra of an almost-algebraic Leibniz algebra $L$. Then the idealiser, $I_{R(L)}(R_B)$, of $R_B$ in $R(L)$ is an almost-algebraic Lie algebra.
\end{coro}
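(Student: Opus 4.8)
The plan is to chain together the two structural results already established for the inner derivation algebra with the idealiser theorem of Auslander and Brezin. First I would invoke Proposition \ref{main}: since $L$ is assumed almost algebraic, the Lie algebra $R(L)$ is an almost-algebraic Lie algebra. This is the crucial transfer step, moving the problem entirely into the setting of ordinary Lie algebras where the Auslander--Brezin theory applies directly.

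Next I would observe that $R_B$ is genuinely a subalgebra of $R(L)$. This is exactly Lemma \ref{2}(i): because $B$ is a subalgebra of $L$, the image $R_B$ is a subalgebra of $R(L)$. At this point I have a subalgebra $R_B$ sitting inside an almost-algebraic Lie algebra $R(L)$, which is precisely the hypothesis needed to apply the relevant result from \cite{ab}. The idealiser $I_{R(L)}(R_B)$ is then computed entirely inside the Lie algebra $R(L)$, so no Leibniz-specific subtleties intervene at this stage.

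The final step is to apply \cite[Theorem 2.3]{ab}, which asserts that the idealiser of any subalgebra of an almost-algebraic Lie algebra is itself almost algebraic; this is the same Auslander--Brezin result already used in the proof of Proposition \ref{elem}. Applying it with the ambient almost-algebraic Lie algebra $R(L)$ and the subalgebra $R_B$ yields immediately that $I_{R(L)}(R_B)$ is an almost-algebraic Lie algebra, which is the desired conclusion.

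I do not expect a genuine obstacle here, since the statement is essentially a repackaging of existing machinery: the only thing to check carefully is that the hypotheses of \cite[Theorem 2.3]{ab} are met, namely that $R(L)$ is almost algebraic (supplied by Proposition \ref{main}) and that $R_B$ is an honest subalgebra (supplied by Lemma \ref{2}(i)). The mild point worth verifying is that the notion of idealiser used in \cite{ab} coincides with $I_{R(L)}(R_B)$ as formed inside the Lie algebra $R(L)$; once that is confirmed, the result follows in one line.
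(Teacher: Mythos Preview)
Your proposal is correct and follows exactly the paper's approach: the paper's proof simply cites Proposition \ref{main} and \cite[Theorem 2.3]{ab}. Your inclusion of Lemma \ref{2}(i) to confirm that $R_B$ is a subalgebra of $R(L)$ makes explicit a step the paper leaves implicit, but the argument is otherwise identical.
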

\begin{proof} This follows from Proposition \ref{main} and \cite[Theorem 2.3]{ab}.
\end{proof}

\begin{definition} The element $x\in L$ is called {\bf $L$-split} if there exist elements $s,n\in L$ such that $R_x=R_s+R_n$ is the decomposition of $R_x$ into its semisimple and nilpotent parts.
\end{definition}

\begin{propo}\label{split} If every element of the Leibniz algebra $L$ is $L$-split, then $L$ is almost algebraic. 
\end{propo}
\begin{proof} Let $x\in L$. Then $R_{x+I}=R_{s+I}+R_{n+I}$ if $ R_x=R_s+R_n$, and $R_{s+I}, R_{n+I}$ are the semisimple and nilpotent parts of $R_{x+I}$, so the result follows from \cite[Theorem 2]{ab}. 
\end{proof}
\medskip

The following result is now proved as in \cite[Theorem 2.3]{ab}.

\begin{propo}\label{idealiser} Let $B$ be a subalgebra of a Leibniz algebra $L$ in which every element is $L$-split. Then the idealiser, $I_L(B)$, of $B$ in $L$ is almost algebraic.
\end{propo}
\begin{proof} Let $J=I_L(B)$. Since $R_L(B)$ leaves $B$ invariant, so does its algebraic hull. In particular, if $x\in J$, both the semisimple and nilpotent parts of $R_L(x)$ leave $B$ invariant. Hence,every element of $J$ is $J$-split, and so, by Proposition \ref{split}, $J$ is almost algebraic.
\end{proof}

\section{Leibniz $A$-algebras}

\begin{propo}\label{lie} Let $L$ be a Lie $A$-algebra and let  $K$ be an ideal of $L$ with $K\subseteq Z(L)$. If $L/K$ is almost algebraic then so is $L$.
\end{propo}
\begin{proof} Let $L/K$ be almost algebraic and let $R$ be the radical of $L$. Then $R/K$ is almost algebraic, by \cite[Corollary 3.1]{ab}, and so $\phi(R/K)=0$, by \cite[Lemma 2.1 (ii)]{further}. Hence $\phi(R)\subseteq K\subseteq Z(R)$, by \cite[Corollary 4.4]{frat}. It follows that $\phi(R)\subseteq Z(R)\cap R^2=0$, by \cite[Theorem 3.3]{lie}, since $R$ is an $A$-algebra. Thus $R$ is almost algebraic, by \cite[Proposition 2.1]{further}, whence so is $L$, by \cite[Corollary 3.1]{ab} again.
\end{proof}

\begin{coro}\label{leib} Let $L$ be a Leibniz $A$-algebra and let  $K$ be an ideal of $L$ with $K\subseteq Z(L)$. If $L/K$ is almost algebraic then so is $L$.
\end{coro}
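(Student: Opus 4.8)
The plan is to reduce the Leibniz statement to the Lie-algebraic one already established in Proposition \ref{lie}, by passing through the liesation. Let $L$ be a Leibniz $A$-algebra with a central ideal $K \subseteq Z(L)$ such that $L/K$ is almost algebraic. By definition, $L$ is almost algebraic precisely when its liesation $L/I$ is an almost-algebraic Lie algebra, so the entire argument should be pushed down to $L/I$. First I would observe that the liesation of $L/K$ is $(L/K)/((I+K)/K) \cong L/(I+K)$, and since $L/K$ is almost algebraic this quotient is an almost-algebraic Lie algebra. The target is to show $L/I$ is almost algebraic.

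Next I would set up the Lie algebra $\bar{L} = L/I$ together with its central ideal $\bar{K} = (I+K)/I$; note $\bar{K}$ is central in $\bar{L}$ because $K \subseteq Z(L)$. Then $\bar{L}/\bar{K} \cong L/(I+K)$ is almost algebraic by the previous paragraph. To invoke Proposition \ref{lie} I need $\bar{L}$ to be a Lie $A$-algebra: this should follow from the hypothesis that $L$ is a Leibniz $A$-algebra, since nilpotent subalgebras of the quotient $L/I$ lift (or correspond) to nilpotent subalgebras of $L$, whose images are abelian. With $\bar{L}$ a Lie $A$-algebra, $\bar{K} \subseteq Z(\bar{L})$ an ideal, and $\bar{L}/\bar{K}$ almost algebraic, Proposition \ref{lie} yields that $\bar{L} = L/I$ is almost algebraic. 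By the definition of almost-algebraic Leibniz algebra, this is exactly the assertion that $L$ is almost algebraic.

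The main obstacle I anticipate is verifying cleanly that $L/I$ is a Lie $A$-algebra, i.e. that the $A$-algebra property descends to the liesation. The $A$-algebra condition concerns nilpotent subalgebras, and while subalgebras of $L/I$ correspond to subalgebras $B/I$ with $B \supseteq I$, one must check that a nilpotent subalgebra of $L/I$ pulls back appropriately so that abelianness is preserved; the quotient of a nilpotent subalgebra need not obviously remain governed by the original hypothesis without some care. A secondary technical point is confirming that $\bar{K}$ is a genuine central ideal of $L/I$ and not merely a central subspace, but this is immediate from $K \subseteq Z(L)$. Once the $A$-algebra descent is secured, the remainder is a direct application of the isomorphism $\bar{L}/\bar{K} \cong L/(I+K)$ and Proposition \ref{lie}, so the crux of the proof lies entirely in transferring the hypotheses from $L$ to its liesation.
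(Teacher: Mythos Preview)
Your proposal is correct and follows essentially the same route as the paper: pass to the liesation $\bar L=L/I$, note that $\bar K=(I+K)/I$ is central with $\bar L/\bar K\cong L/(I+K)$ almost algebraic, check that $\bar L$ is a Lie $A$-algebra, and then invoke Proposition~\ref{lie}. The paper obtains the almost-algebraicity of $L/(I+K)$ via Proposition~\ref{p:fac} rather than directly from the definition, and disposes of your ``main obstacle'' (that $L/I$ is a Lie $A$-algebra) by citing \cite[Lemma~2]{lie}, but otherwise the arguments coincide.
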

\begin{proof} Let $L/K$ be almost algebraic. Then 
\[ \frac{L/I}{(I+K)/I}\cong \frac{L/K}{(I+K)/K},
\] which is almost algebraic, by Proposition \ref{p:fac}. Moreover, $(I+K)/I\subseteq Z(L/I)$ and $L/I$ is a Lie $A$-algebra, by \cite[Lemma 2]{lie}, so $L/I$ is almost algebraic, by Proposition \ref{lie}. Hence $L$ is almost algebraic.
\end{proof}

\begin{lemma}\label{ars} Let $L$ be an almost-reductive symmetric Leibniz $A$-algebra. Then $L$ is a Lie algebra.
\end{lemma}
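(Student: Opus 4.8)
The plan is to reduce the claim to the two corollaries already in hand, namely Corollary \ref{c:aalg}, which computes the Frattini ideal of an almost-reductive Leibniz algebra, and Corollary \ref{lie1}, which says that a $\phi$-free symmetric Leibniz algebra is automatically a Lie algebra. The whole argument then amounts to showing that $L$ is $\phi$-free and invoking Corollary \ref{lie1}.

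First I would exploit the $A$-algebra hypothesis to pin down the nilradical. By definition $N=N(L)$ is a nilpotent ideal of $L$, hence in particular a nilpotent subalgebra; since $L$ is an $A$-algebra, every nilpotent subalgebra is abelian, so $N$ is abelian and therefore $N^2=[N,N]=0$. Because $L$ is almost reductive, Corollary \ref{c:aalg} applies and gives $\phi(L)=N^2=0$, so $L$ is $\phi$-free. Finally, $L$ is a $\phi$-free symmetric Leibniz algebra, so Corollary \ref{lie1} applies verbatim and delivers the conclusion that $L$ is a Lie algebra.

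I do not expect a genuine obstacle here: the statement is essentially a composite of the two corollaries, and the almost-algebraic/reductive machinery of Theorem \ref{t:aa} is not even needed. The only point deserving a moment's care is the opening step, where one must explicitly note that the nilradical counts as a nilpotent subalgebra, so that the $A$-algebra condition is legitimately applicable and forces $N^2=0$. Once that observation is made, the chain $\phi(L)=N^2=0\Rightarrow L\text{ is }\phi\text{-free}\Rightarrow L\text{ is a Lie algebra}$ is immediate, and the proof is complete.
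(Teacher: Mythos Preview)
Your proposal is correct and follows essentially the same route as the paper: use the $A$-algebra hypothesis to get $N^2=0$, apply Corollary~\ref{c:aalg} to conclude $\phi(L)=N^2=0$, and then invoke Corollary~\ref{lie1}. The paper's proof is just a one-line compression of exactly this argument.
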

\begin{proof} Since $L$ is almost reductive, $\phi(L)=N^2=0$, since $L$ is an $A$-algebra. The result now follows from Corollary \ref{lie1}.
\end{proof}

\begin{lemma}\label{R(L)} If $L$ is a symmetric Leibniz $A$-algebra, then $L/I$ is a Lie $A$-algebra.
\end{lemma}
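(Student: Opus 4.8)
The plan is to show that $L/I$ is an $A$-algebra, that is, that every nilpotent subalgebra of $L/I$ is abelian. Recall that $L/I$ is already known to be a Lie algebra (it is the liesation of $L$), so I only need the $A$-algebra property. The natural strategy is to lift nilpotent subalgebras of $L/I$ back to $L$, exploit the hypothesis that $L$ is an $A$-algebra, and push the resulting abelian-ness back down to the quotient.

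First I would take an arbitrary nilpotent subalgebra $\bar{B}$ of $L/I$ and write it as $\bar{B}=(B+I)/I\cong B/(B\cap I)$ for a suitable subalgebra $B$ of $L$; one can choose $B$ so that $I\cap B$ is exactly the preimage data, and in fact arrange $B$ to be the full preimage of $\bar B$ intersected with a complement, or more simply work with the preimage $B = \{x \in L : x+I \in \bar B\}$. The key point to establish is that $B$ (or a suitable nilpotent subalgebra mapping onto $\bar B$) is nilpotent: since $I\subseteq N(L)$ and $\bar B$ is nilpotent, the preimage $B$ is an extension of the nilpotent ideal $B\cap I$ by the nilpotent algebra $\bar B$, and because $I$ lies in the nilradical this extension is nilpotent (the Leibniz kernel $I$ is nilpotent and central-like enough that nilpotency lifts). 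Then $B$, being a nilpotent subalgebra of the $A$-algebra $L$, is abelian. Consequently $\bar B$, a homomorphic image of the abelian algebra $B$, is abelian, which is exactly what is required.

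The main obstacle I anticipate is the lifting step: verifying that a nilpotent subalgebra of $L/I$ has a nilpotent preimage (or nilpotent lift) in $L$. This is where the symmetric hypothesis should be doing work, since for symmetric Leibniz algebras $I$ is well-behaved ($x^3=0$ for all $x$, and $I$ is nilpotent), and one expects results of the type $N(L/I)=N(L)/I$ (analogous to the cited \cite[Lemma 2.3]{nil} used elsewhere in the paper). The cleanest route is probably to invoke such a correspondence between nilpotent subalgebras to guarantee that $\bar B$ nilpotent forces the chosen lift $B$ to be nilpotent; once that is in hand, the $A$-algebra property of $L$ immediately gives $B$ abelian and the conclusion follows. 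I would therefore structure the proof as: (1) recall $L/I$ is Lie; (2) take $\bar B$ nilpotent and produce a nilpotent lift $B\le L$; (3) apply the $A$-algebra hypothesis to get $B$ abelian; (4) conclude $\bar B$ abelian, hence $L/I$ is a Lie $A$-algebra.
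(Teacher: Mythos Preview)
Your strategy is exactly the paper's: take the full preimage $K\supseteq I$ of a nilpotent subalgebra of $L/I$, show $K$ itself is nilpotent, invoke the $A$-hypothesis to conclude $K$ is abelian, and pass to the quotient. The only place your proposal wobbles is in justifying the nilpotency of $K$. The reason you offer first (``because $I$ lies in the nilradical this extension is nilpotent'') is not valid on its own: an extension of a nilpotent algebra by a nilpotent ideal need not be nilpotent, and the cited nilradical correspondence $N(L/I)=N(L)/I$ concerns the nilradical, not arbitrary nilpotent subalgebras.

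The correct argument is precisely the ``central-like'' remark you make in passing, and it is much shorter than the machinery you propose. For a \emph{symmetric} Leibniz algebra one has $I\subseteq Z(L)$; hence if $(K/I)^r=0$ then $K^r\subseteq I$ and $K^{r+1}=[K^r,K]\subseteq[I,L]=0$. That one line is the entire content of the paper's proof of nilpotency of $K$, and it is where the symmetric hypothesis is actually used. So drop the extension/nilradical discussion and argue directly from $I\subseteq Z(L)$.
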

\begin{proof} If $K/I$ is a nilpotent subalgebra of $L/I$, $K^r\subseteq I$ for some $r>0$, whence $K^{r+1}=0$. It follows that $K$ is nilpotent and thus abelian.   
\end{proof}

\begin{theor}\label{A} Let $L$ be a symmetric Leibniz $A$-algebra. Then $L$ is an almost-reductive algebra if and only if it is an elementary Lie algebra.
\end{theor}
\begin{proof} $(\Rightarrow)$ Let $L$ be an almost-reductive symmetric Leibniz $A$-algebra. Then $L$ is a Lie algebra, by Lemma \ref{ars}.It now follows that it is elementary, by \cite[Theorem 2.4]{further}.
\par

\noindent $(\Leftarrow)$  The converse follows from \cite[Theorem 2.4]{further}.
\end{proof}

\begin{coro} Let $L$ be a symmetric Leibniz $A$-algebra with radical $\Gamma$. If $\Gamma$ is $\phi$-free, then $L$ is an elementary Lie algebra.
\end{coro}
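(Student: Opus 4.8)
The plan is to reduce to Theorem~\ref{A} by showing that $L$ is an almost-reductive Lie algebra. Since every subalgebra of a symmetric Leibniz $A$-algebra is again a symmetric Leibniz $A$-algebra (the identities are inherited, and a nilpotent subalgebra of $\Gamma$ is a nilpotent subalgebra of $L$, hence abelian), the radical $\Gamma$ is itself such an algebra. Being $\phi$-free, $\Gamma$ is a Lie algebra by Corollary~\ref{lie1} and almost reductive by Theorem~\ref{phifree}(i). The task is then to upgrade these two facts about $\Gamma$ to the corresponding facts about $L$.

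First I would show that $L$ itself is a Lie algebra, i.e.\ that its Leibniz kernel $I$ vanishes. Taking a Levi decomposition $L=\Gamma\dot{+}S$ with $S$ semisimple, and using that in characteristic zero $I$ is spanned by the symmetric products $[x,y]+[y,x]$, bilinearity shows that the $\Gamma$--$\Gamma$ contributions vanish because $\Gamma$ is a Lie algebra and the $S$--$S$ contributions vanish because $S$ is a Lie algebra, so $I$ is spanned by the terms $[\gamma,s]+[s,\gamma]$ with $\gamma\in\Gamma$, $s\in S$. Decomposing $\Gamma$ into irreducible $S$-bimodules (complete reducibility holds since $S$ is semisimple in characteristic zero), \cite[Lemma 1.9]{barnes} splits each summand $A$ into one of two types: either $[s,a]=-[a,s]$, in which case its symmetric products already vanish, or $[S,A]=0$, in which case the symmetric product reduces to $[a,s]$ so that $I$ picks up $[A,S]$. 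For a summand of the second type, $[A,S]$ is an $S$-subbimodule of the irreducible $A$, hence equals $0$ or $A$; but $[A,S]\subseteq I\subseteq Z(L)$, since $I$ is central in a symmetric Leibniz algebra, so $[A,S]=A$ would force $A\subseteq Z(L)$ and thus $[A,S]=0$, a contradiction unless $A=0$. Hence every contribution vanishes, $I=0$, and $L$ is a Lie algebra.

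With $L$ a Lie algebra whose radical $\Gamma$ is almost reductive, $\Gamma$ is almost algebraic by Theorem~\ref{t:aa}, so $L$ is almost algebraic by Theorem~\ref{rad1}(i). Since $L$ is now a Lie algebra, for which the notions of almost algebraic and almost reductive coincide, $L$ is almost reductive. As $L$ is a symmetric Leibniz $A$-algebra, Theorem~\ref{A} then yields that $L$ is an elementary Lie algebra, as required.

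I expect the second step --- proving $I=0$ --- to be the main obstacle, since it is the only place where the non-Lie nature of $L$ must be confronted directly; everything before and after is bookkeeping with results already established. The delicate points there are the complete reducibility of $\Gamma$ as an $S$-bimodule and the centrality of $I$, which together force the irreducible-summand dichotomy of \cite[Lemma 1.9]{barnes} to collapse onto the trivial case.
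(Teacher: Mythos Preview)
Your proof is correct, and it takes a genuinely different route from the paper's.

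The paper does not attempt to prove $I=0$ directly. Instead it invokes the structural Theorems~\ref{symm1} and~\ref{symm2}: from $\Gamma$ almost reductive it obtains the decomposition $L=N+\Sigma$ with $N\cap\Sigma=I$, $\Sigma=(\Gamma\cap\Sigma)\oplus S$, and $\Gamma=N\dot{+}C$ with $R_c|_N$ semisimple. The $A$-algebra hypothesis is then used twice --- to upgrade $(\Gamma\cap\Sigma)^3=0$ to $(\Gamma\cap\Sigma)^2=0$, and to make $N$ abelian --- so that writing $\sigma\in\Gamma\cap\Sigma$ as $n+c$ gives $R_\sigma|_N=R_c|_N$, which is semisimple. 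From this the paper concludes that $L$ is almost reductive and applies Theorem~\ref{A}; the fact that $L$ is a Lie algebra emerges only at the very end, via Lemma~\ref{ars}.

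Your argument reverses the order: you first establish that $\Gamma$ is already a Lie algebra via Corollary~\ref{lie1}, then kill $I$ by decomposing $\Gamma$ into irreducible $S$-bimodules and using Barnes's dichotomy together with $I\subseteq Z(L)$ to force every mixed symmetric product $[\gamma,s]+[s,\gamma]$ to vanish. Once $L$ is a Lie algebra, the equivalence of almost-algebraic and almost-reductive in the Lie case lets you bypass Theorems~\ref{symm1}/\ref{symm2} entirely. Your route is more elementary and self-contained, and it makes the conclusion ``$L$ is a Lie algebra'' explicit early on; the paper's route is shorter on the page because it recycles the structural machinery already developed, but the step ``it follows that $L$ is almost reductive'' carries more implicit content.
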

\begin{proof} Assume that $\Gamma$ is $\phi$-free. Then $\Gamma$ is almost reductive, by Corollary \ref{phifree} (i). It follows that $L$ is as described in Theorem \ref{symm2}. Moreover, $(\Gamma\cap \Sigma)^2=0$, since $L$ is an $A$-algebra, and, if $\sigma\in \Gamma\cap \Sigma$, $\sigma=n+c$ for some $n\in N$, $c\in C$. Hence $[n',\sigma]=[n',c]$ for all $n'\in N$, and so $R_{\sigma}\mid_N$ is semisimple. It follows that $L$ is almost reductive and hence that $L$ is an elementary Lie algebra, by Theorem \ref{A}.
\end{proof}

\begin{coro} Let $L$ be an almost-reductive symmetric Leibniz $A$-algebra. Then $L$ splits over each of its ideals.
\end{coro}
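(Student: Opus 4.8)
The plan is to reduce the statement to a purely Lie-theoretic fact and then invoke the structure theory of $\phi$-free $A$-algebras. Since $L$ is an almost-reductive symmetric Leibniz $A$-algebra, Theorem \ref{A} identifies $L$ immediately as an \emph{elementary Lie algebra} (equivalently, Lemma \ref{ars} shows $L$ is a Lie algebra and then \cite[Theorem 2.4]{further} upgrades this to elementary). In particular $I = 0$, so the Leibniz structure plays no further role, and it remains only to prove the following Lie-theoretic assertion: an elementary Lie $A$-algebra splits over each of its ideals.

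For this I would exploit that $L$, being elementary, is $\phi$-free, so by Theorem \ref{phifree}(i) we may write $L = N \dot{+} V$ with $N = N(L) = {\rm Asoc}\,L$ abelian and completely reducible as a $V$-module, and $V$ acting completely reducibly on $N$. Write ${\rm Asoc}\,L = A_1 \oplus \cdots \oplus A_k$ as a sum of minimal ideals. Given an ideal $A$ of $L$, the intersection $A \cap N$ is an $L$-submodule of the completely reducible module $N$, so it is a sub-sum of the $A_i$ and admits an $L$-invariant (hence ideal) complement $N_1$ inside $N$; this splits off the abelian layer. For the remainder I would combine this with the Levi decomposition $L = \Gamma \dot{+} S$, the complete reducibility of $S$-modules (Weyl), and the diagonalisable action on $N$ of the toral complement of $N$ in $\Gamma$, which for an $A$-algebra is guaranteed by the relation $Z(\Gamma)\cap \Gamma^2 = 0$ of \cite[Theorem 3.3]{lie}. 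One then inducts on $\dim L$, using that subalgebras of an elementary algebra are elementary and that the relevant quotients remain elementary $A$-algebras.

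The main obstacle is the final assembly step: producing from the layerwise complements an actual \emph{subalgebra} complement $B$ to a general ideal $A$, rather than merely a complementary subspace, since $A$ may meet the abelian nilradical $N$, the toral part and the Levi factor $S$ all nontrivially, and the cross-brackets between these layers must be controlled. I expect to handle this by first splitting off $A \cap N$ via the ideal-complement $N_1$ above, passing to the smaller (still elementary) algebra $L/(A\cap N)$ to split off the image of $A$ by induction, and then lifting the complement back through the abelian kernel $A \cap N$; the lifting is precisely where the $A$-algebra condition $Z(\Gamma)\cap \Gamma^2 = 0$ and complete reducibility are needed to annihilate the extension obstruction. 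More economically, the splitting of elementary Lie algebras over their ideals is already part of the structure theory developed in \cite{further}, so once $L$ has been recognised as an elementary Lie algebra the entire second half may be replaced by a direct appeal to that reference.
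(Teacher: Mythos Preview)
Your proposal is correct and follows essentially the same route as the paper: reduce to the Lie case via Lemma~\ref{ars} (you go one step further to ``elementary'' via Theorem~\ref{A}) and then invoke the splitting result from \cite{further}; the paper simply cites \cite[Corollary~2.6]{further} directly, which is exactly the ``economical'' option you mention at the end. The intermediate hands-on sketch you offer is not needed once that citation is in place.
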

\begin{proof} This follows from Lemma \ref{ars} and \cite[Corollary 2.6]{further}.
\end{proof}

\begin{propo}\label{E} Let $L$ be a Leibniz algebra over any field. Then $L$ is an $E$-algebra if and only if $L/\phi(L)$ is elementary.
\end{propo}
\begin{proof} The proof is the same as for the Lie case in \cite[Proposition 2]{ernie}.
\end{proof}

\begin{propo} Let $L$ be a symmetric Leibniz $A$-algebra. Then $L$ is an $E$-algebra.
\end{propo}
\begin{proof} Let $L$ be a Leibniz $A$-algebra. Then $L/\phi(L)$ is an $A$-algebra , by \cite[Lemma 2]{Aalg}. But $L/\phi(L)$ is $\phi$-free and so is almost-reductive, by Corollary \ref{c:aalg} (i). Hence $L/\phi(L)$ is elementary, by Theorem \ref{A}, and so $L$ is an $E$-algebra, by Proposition \ref{E}.
\end{proof}

\end{document}